\def\Z{\mathbb Z}
\def\Q{\mathbb Q}
\def\R{\mathbb R}
\def\F{\mathbb F}
\def\cR{\mathcal R}
\def\cH{\mathcal H}
\def\cQ{\mathcal Q}
\def\fp{\mathfrak p}
\def\bi{\mathbf i}
\def\1{{\bf 1}}
\def\jacob #1#2{\genfrac{(}{)}{}{}{#1}{#2}}
\def\pmod #1{\ ({\rm{mod}}\ #1)}
\theoremstyle{plain}
\newtheorem{theorem}{Theorem}
\newtheorem{lemma}{Lemma}
\theoremstyle{definition}
\newtheorem*{acknowledgment}{Acknowledgments}
\theoremstyle{remark}
\begin{document}

\title{Some permutations over $\F_p$ concerning primitive roots}

\begin{abstract}
Let $p$ be an odd prime and let $\F_p$ denote the finite field with $p$ elements. Suppose that $g$ is a primitive root of $\F_p$. Define the permutation $\tau_g:\,\cH_p\to\cH_p$ by
$$
\tau_g(b):=\begin{cases} g^b,&\text{if }g^b\in\cH_p,\\
-g^b,&\text{if }g^b\not\in\cH_p,\\
\end{cases}
$$
for each $b\in\cH_p$, where $\cH_p=\{1,2,\ldots,(p-1)/2\}$ is viewed as a subset of $\F_p$. In this paper, we investigate the sign of $\tau_g$. For example, if $p\equiv 5\pmod{8}$, then
$$
(-1)^{|\tau_g|}=(-1)^{\frac{1}{4}(h(-4p)+2)}
$$
for every primitive root $g$, where $h(-4p)$ is the class number of the imaginary  quadratic field $\Q(\sqrt{-4p})$.
\end{abstract}

\author{Li-Yuan Wang}
\address{Department of Mathematics, Nanjing University, Nanjing 210093,
People's Republic of China}
\email{wly@smail.nju.edu.cn}
\author{Hao Pan}
\address{School of Applied Mathematics, Nanjing University of Finance and Economics, Nanjing 210046, People's Republic of China}
\email{haopan79@zoho.com}
\keywords{permutation; primitive root of a prime; }

\subjclass[2010]{Primary 11T22; Secondary 	05A05, 11R29, 12E20 }

\thanks{The first author is supported by  the National Natural Science Foundation of China (Grant No. 11571162). The second author is supported by the National Natural Science Foundation of China (Grant No. 11671197).}

\maketitle

\section{Introduction}
\setcounter{lemma}{0}
\setcounter{theorem}{0}
\setcounter{corollary}{0}
\setcounter{remark}{0}
\setcounter{equation}{0}
\setcounter{conjecture}{0}
Suppose that $p$ is an odd prime. Let $\F_p$ denote the finite field with $p$ elements and let $\F_p^\times=\F_p\setminus\{0\}$. For convenience, we may identify $\F_p$ with $\{0,1,\ldots,p-1\}$.  
Suppose that  $g$ is a primitive root of $\F_p$. Define 
\begin{equation}
\sigma_g(b):=g^b
\end{equation}
for each $b\in\{1,\ldots,p-1\}$.
Since $\F_p^\times$ is identified with $\{1,\ldots,p-1\}$, we can view $\sigma_g$ as a permutation over $\F_p^\times$. In \cite{KLP}, Kohl considered the sign of the permutation $\sigma_g$ and proposed an interesting problem:

\begin{itemize} 
\item \textit{If $p\equiv 1\pmod{4}$, then
\begin{equation}\label{sigmap14}
|\{g\in\cR_p:\,(-1)^{|\sigma_g|}=1\}|=|\{g\in\cR_p:\,(-1)^{|\sigma_g|}=-1\}|,
\end{equation}
where $(-1)^{|\sigma_g|}$ denotes the sign of $\sigma_g$ and
$$\cR_p:=\{g\in\F_p:\,g\text{ is a primitive root}\}.
$$}

\item \textit{If $p\equiv 3\pmod{4}$, then
\begin{equation}\label{sigmap34}
(-1)^{|\sigma_g|}\equiv-\bigg(\frac{p-1}{2}\bigg)!\pmod{p}
\end{equation}
for each $g\in\cR_p$.}
\end{itemize}

\medskip\noindent Soon, (\ref{sigmap14}) and (\ref{sigmap34}) were confirmed
by Ladisch and Petrov \cite{KLP} respectively. In particular, the key ingredient of Petrov's proof is the formula
\begin{equation}\label{petrov}
\prod_{\substack{1\leq i<j\leq p-1}}(\zeta^j-\zeta^i)=e^{\frac{(p-2)(3p-1)}{4}\cdot\pi\bi}\cdot(p-1)^{\frac{p-1}{2}},
\end{equation}
where $\bi=\sqrt{-1}$
and
$\zeta=e^{\frac{2\pi\bi}{p-1}}$ is the $(p-1)$-th primitive root of unity.
 A classical result of Mordell \cite{M61} says that for any prime $p\equiv 3\pmod{4}$
$$
\bigg(\frac{p-1}{2}\bigg)!\equiv(-1)^{\frac{1}{2}(h(-p)+1)}\pmod{p},
$$
where $h(d)$ denotes the class number of the quadratic field $\Q(\sqrt{d})$. Hence
(\ref{sigmap34}) also can be rewritten as
\begin{equation}\label{sigmap34b}
(-1)^{|\sigma_g|}\equiv(-1)^{\frac{1}{2}(h(-p)-1)}\pmod{p}.
\end{equation}

In this note, we shall consider a variant of Kohl's problem. Let $$
\cH_p:=\bigg\{1,2,\ldots,\frac{p-1}2\bigg\}$$ 
and view $\cH_p$ as a subset of $\F_p$. 
We shall define the permutation $\tau_g$ over $\cH_p$
for every primitive root $g\in\cR_p$. Define
\begin{equation}
\tau_g(b):=\begin{cases} g^b,&\text{if }g^b\in\cH_p,\\
-g^b,&\text{if }g^b\not\in\cH_p,\\
\end{cases}
\end{equation}
for each $b\in\cH_p$. Note that $-g^{b}=g^{\frac{1}{2}(p-1)+b}$. It is easy to see that $\tau_g$ is a permutation over $\cH_p$. 
In fact, our definition of $\tau_g$ is motivated by the well-known Gauss lemma, which says that for each $a\in\F_p^\times$, the Legendre symbol
$$
\jacob{a}p=(-1)^{N_{a,p}},
$$
where
$$
N_{a,p}:=|\{b\in\cH_p:\,ab\not\in\cH_p\}|.
$$

It is natural to ask what the sign of $\tau_g$ is. We have
\begin{theorem}\label{main}
	(i) If $p\equiv 1 \pmod 8$, then for each $g\in \cR_p$,
\begin{equation}\label{tau18}
(-1)^{|\tau_g|} =(-1)^{\frac{1}{4}h(-4p)} \cdot (-1)^{|\sigma_g|}.
\end{equation}	

 (ii) If $p\equiv 5 \pmod 8$, then for each $ g \in \cR_p$,
 \begin{equation} \label{tau58}
 (-1)^{|\tau_g|}=(-1)^{\frac{1}{4}(h(-4p)+2)}.
 \end{equation}

	(iii) If $p=18(2n+1)^2+1$ for some positive integer $n$, then 
	 \begin{equation}\label{tau182n121}
	 (-1)^{|\tau_g|}=(-1)^{n+1}.
	 \end{equation}

	(iv) If $p\equiv3\pmod{4}$ is not of the form $18(2n+1)^2+1$, then
	\begin{equation}
|\{ g\in \mathcal{R}_p  :(-1)^{ \left| \tau_g \right|}=1 \}|=|\{ g\in \mathcal{R}_p  :(-1)^{ \left| \tau_g \right|}=-1 \}|.
\end{equation}	
\end{theorem}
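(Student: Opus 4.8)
The plan is to reduce all four assertions to a single congruence for $(-1)^{|\tau_g|}$ modulo $p$ and then to read each case off the arithmetic of a ratio of Vandermonde products over the quadratic residues. Write $\ell=(p-1)/2$. Starting from the exact identity $(-1)^{|\tau_g|}=\prod_{1\le i<j\le\ell}\frac{\tau_g(j)-\tau_g(i)}{j-i}$, whose value lies in $\{\pm1\}\subset\Z$, I would multiply numerator and denominator by $\prod_{1\le i<j\le\ell}(\tau_g(i)+\tau_g(j))$. Since $\{\tau_g(1),\dots,\tau_g(\ell)\}=\cH_p$ as a set, this product is permutation-invariant and equals $\prod_{1\le i<j\le\ell}(i+j)$, so each difference becomes $\tau_g(j)^2-\tau_g(i)^2$ and the denominator becomes $\prod_{i<j}(j-i)(i+j)=\prod_{i<j}(j^2-i^2)$, every factor of which lies in $\{1,\dots,p-2\}$ and is therefore invertible modulo $p$. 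Using $\tau_g(b)^2\equiv g^{2b}\pmod p$ this yields the key congruence
\[
(-1)^{|\tau_g|}\equiv\frac{\prod_{1\le i<j\le\ell}(g^{2j}-g^{2i})}{\prod_{1\le i<j\le\ell}(j^2-i^2)}\pmod p,
\]
in which both products run over the set of quadratic residues $\{g^{2b}\}=\{b^2\}$, merely listed in two different orders.

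Next I would isolate the dependence on $g$. Replacing the primitive root $g$ by $g^k$ with $\gcd(k,p-1)=1$ leaves the denominator unchanged and permutes the factors $g^{2b}$ of the numerator according to multiplication by $k$ on the exponents modulo $\ell$, so that $(-1)^{|\tau_{g^k}|}=\varepsilon_\ell(k)\,(-1)^{|\tau_g|}$, where $\varepsilon_\ell(k)$ is the sign of multiplication by $k$ on $\Z/\ell$. When $\ell$ is odd, i.e.\ $p\equiv3\pmod4$, Zolotarev's lemma identifies $\varepsilon_\ell(k)$ with the Jacobi symbol $\jacob{k}{\ell}$, which is the trivial character exactly when $\ell$ is a perfect square; since $p=2\ell+1$ is prime, $\ell=m^2$ forces $m$ odd and $3\mid m$, that is $p=18(2n+1)^2+1$. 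In that case $(-1)^{|\tau_g|}$ is constant on $\cR_p$ (part (iii)); otherwise $k\mapsto\jacob{k}{\ell}$ is a nontrivial character which stays balanced after restricting to the $k$ coprime to $2\ell=p-1$ (oddness of $\ell$ makes the parity of $k$ independent of $k\bmod\ell$), giving the equidistribution in part (iv). When $\ell$ is even, $\varepsilon_\ell(k)$ reduces to its $2$-part; comparing with the analogous relation $(-1)^{|\sigma_{g^k}|}=\varepsilon_{2\ell}(k)(-1)^{|\sigma_g|}$ for Kohl's permutation, and noting that for $p\equiv1\pmod8$ both $\ell$ and $2\ell$ are divisible by $4$, so that the $2$-parts of $\varepsilon_\ell(k)$ and $\varepsilon_{2\ell}(k)$ agree while their odd parts are trivial and hence $\varepsilon_\ell(k)=\varepsilon_{2\ell}(k)$, shows that $(-1)^{|\tau_g|}(-1)^{|\sigma_g|}$ is independent of $g$, which is the shape of part (i); for $p\equiv5\pmod8$ one has $\ell\equiv2\pmod4$, whence $\varepsilon_\ell(k)\equiv1$ and $(-1)^{|\tau_g|}$ is already $g$-independent, matching part (ii).

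It then remains to pin down the $g$-independent base value in cases (i), (ii), (iii). The numerator is a Vandermonde in the $\ell$-th roots of unity $g^2,g^4,\dots,g^{2\ell}=1$, so its square is the discriminant $(-1)^{(\ell-1)(\ell-2)/2}\ell^{\ell}$ of $x^\ell-1$; to recover the actual value, and not merely its square, I would lift to characteristic zero and compare with the complex Vandermonde of the $\ell$-th roots of unity in the spirit of Petrov's formula (\ref{petrov}), resolving the remaining sign by Gauss's determination of the sign of the quadratic Gauss sum. The denominator $\prod_{i<j}(j^2-i^2)$ is handled by the same mechanism. After collecting the powers of $\ell\equiv-2^{-1}\pmod p$ and the quadratic-residue symbols that appear, the outcome is a single parity which I would convert into the class number of $\Q(\sqrt{-p})$ (discriminant $-4p$) through Dirichlet's class number formula, treating $p\equiv1$ and $p\equiv5\pmod8$ separately; this produces the exponents $\tfrac14 h(-4p)$ and $\tfrac14(h(-4p)+2)$ of parts (i) and (ii). In the perfect-square case $\ell=m^2$ with $m=3(2n+1)$, the discriminant sign is $+1$ and $\sqrt\ell=m$ is rational, so the base value collapses to the explicit constant $(-1)^{n+1}$ of part (iii).

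The main obstacle is exactly this sign determination: the discriminant fixes the base value only up to $\pm1$, and extracting the correct sign forces a genuine characteristic-zero (or Gauss-sum) computation, coupled with the precise form of Dirichlet's formula for the discriminant $-4p$ and Gauss's sign theorem, carried out case by case modulo $8$. By comparison, the reduction to the key congruence and the Zolotarev analysis of the $g$-dependence—hence the clean dichotomy between parts (iii) and (iv)—should be routine once the key congruence is in hand.
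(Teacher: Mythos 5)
Your skeleton is essentially the paper's own. Your trick of multiplying by $\prod_{i<j}(\tau_g(i)+\tau_g(j))$ reproduces exactly the key congruence the paper obtains by conjugating $\tau_g$ with $\lambda(b)=b^2$, and your Zolotarev analysis is, for $p\equiv 3\pmod 4$, literally the published argument: the paper proves $\tau_{g^a}=\tau_g\circ\psi\circ\eta_a\circ\psi^{-1}$, identifies $(-1)^{|\eta_a|}=\jacob{a}{h}$ with $h=(p-1)/2$ via the generalized Zolotarev lemma of \cite{BC}, shows $h$ is a perfect square exactly when $p=18(2n+1)^2+1$, and in case (iv) picks a single $a$ with $\jacob{a}{h}=-1$ and $(a,p-1)=1$ by the Chinese remainder theorem; your character-balance formulation is equivalent to that counting argument. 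In case (iii) you elide two steps that are routine but must be done: the Wilson-pairing evaluation $\prod_{1\le i<j\le(p-1)/2}(j^2-i^2)\equiv 1\pmod p$ (valid since $(p-1)/2$ is odd) and the reciprocity computation $\jacob{6n+3}{p}=(-1)^{n+1}$; the base value does not simply "collapse" but equals a Legendre symbol that has to be evaluated. Note also that no Gauss-sum sign theorem is needed anywhere: the paper pins down $\Upsilon(\zeta)=e^{\frac{(p-3)(3p+1)}{16}\pi\bi}\cdot(\frac{p-1}{2})^{\frac{p-1}{4}}$ by directly summing $\arg(1-e^{\bi\theta})$, an elementary computation.

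The genuine gap is in parts (i)--(ii), at precisely the point you flag as the main obstacle, and it is worse than a sign-chase. After the complex evaluation, since $\frac{p-1}{2}\equiv-2^{-1}\pmod p$, everything reduces to the \emph{quartic} character of $2$ modulo $p$ (and, for $p\equiv5\pmod 8$, additionally to $(\frac{p-1}{2})!$). The bridge from that to $h(-4p)$ is not a routine application of Dirichlet's class number formula: it is the Williams--Currie theorem \cite{WC82}, namely $2^{\frac{p-1}{4}}\equiv(-1)^{\frac{h(-p)}{4}+\frac{p-1}{8}}\pmod p$ for $p\equiv1\pmod8$ and $2^{\frac{p-1}{4}}(\frac{p-1}{2})!\equiv(-1)^{\frac{1}{4}(h(-p)+2)+\frac{p-5}{8}}\pmod p$ for $p\equiv5\pmod8$ (here $h(-p)=h(-4p)$ is the class number of $\Q(\sqrt{-p})$, of discriminant $-4p$). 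These congruences are a substantive biquadratic-reciprocity result whose proof goes through quartic Gauss--Jacobi sums; they do not fall out of Dirichlet's formula plus Gauss's sign theorem "case by case modulo $8$", and your plan offers no derivation. Unless you import this theorem (or an equivalent), (i) and (ii) do not close. Finally, for (i) your twisting argument only shows that $(-1)^{|\tau_g|}(-1)^{|\sigma_g|}$ is independent of $g$ -- the correct \emph{shape} -- but identifying the constant as $(-1)^{\frac14 h(-4p)}$ still requires Petrov's formula (\ref{petrov}) together with $\prod_{1\le i<j\le p-1}(j-i)\equiv(-1)^{\frac{p^2-9}{8}}(\frac{p-1}{2})!\pmod p$, exactly as in the paper.
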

The proof of Theorem \ref{main} will be given in the subsequent two sections. In fact, we shall use a modification of Petrov's discussions to prove (i)-(iii) of Theorem \ref{main}. And for the final case (iv), we shall find $1\leq a\leq p-1$ with $(a,p-1)=1$ such that $\tau_{g}$ and $\tau_{g^a}$ have the opposite parity for each $g\in\cR_p$.

\section{The case $p\equiv 1\pmod{4}$}
\setcounter{lemma}{0}
\setcounter{theorem}{0}
\setcounter{corollary}{0}
\setcounter{remark}{0}
\setcounter{equation}{0}
\setcounter{conjecture}{0}

In this section, we shall prove (i) and (ii) of Theorem\ref{main}. First, suppose that $p$ is an odd prime.
Let $$\cQ_p=\{x^2:\, x\in\F_p^*\},$$ i.e., $\cQ_p$ is the set of all non-zero quadratic residues in $\F_p$. Define $\lambda:\,\cH_p\to\cQ_p$ by
$$
\lambda(b)=b^2.
$$
Since $\cH_p=\{1,\ldots,(p-1)/2\}$, clearly $\lambda$ is a bijection.
Let
$$
\nu_g:=\lambda\circ\tau_g\circ\lambda^{-1}.
$$
Then $\nu_p$ is a permutation over $\cQ_p$. Recall that the sign of a permutation is also determined by its decomposition into the product of disjoint cycles. So we must have
\begin{equation}
(-1)^{|\nu_g|}=(-1)^{|\tau_g|}.
\end{equation}

Clearly for each $1\leq b\leq (p-1)/2$, we have
$$
\nu_g(b^2)=g^{2b}.
$$
Hence
\begin{align}\label{tau_g}
(-1)^{|\nu_g|}=\prod_{1\leq i<j\leq \frac{p-1}{2}}\frac{g^{2j}-g^{2i}}{j^2-i^2}
\end{align}
over $\F_p$. 
\begin{lemma} Suppose that the prime $p\equiv1\pmod{4}$. Then
\begin{equation}\label{j2i2modp}
\prod_{1\leq i<j\leq \frac{p-1}{2}}(j^2-i^2)\equiv-\bigg(\frac{p-1}{2}\bigg)!\pmod{p}.
\end{equation}
\end{lemma}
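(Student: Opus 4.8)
The plan is to evaluate the product explicitly as a ratio of factorials and then reduce modulo $p$ using Wilson's theorem. Write $m=\frac{p-1}{2}$ and factor $j^2-i^2=(j-i)(j+i)$. First I would fix $j$ and compute the inner product over $i$: since $\prod_{i=1}^{j-1}(j-i)=(j-1)!$ and $\prod_{i=1}^{j-1}(j+i)=(2j-1)!/j!$, the inner product collapses to $(2j-1)!/j$. Taking the product over $j$ then gives the clean integer identity
$$
\prod_{1\le i<j\le m}(j^2-i^2)=\prod_{j=1}^m\frac{(2j-1)!}{j}=\frac{\prod_{j=1}^m(2j-1)!}{m!}.
$$
Hence, modulo $p$, it suffices to prove $\prod_{j=1}^m(2j-1)!\equiv -(m!)^2\pmod p$.

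The key step is a Wilson-type pairing. For $1\le j\le m$ I would split $(p-1)!=(2j-1)!\cdot\prod_{k=2j}^{p-1}k$; replacing each $k$ in the tail by $-(p-k)$ and observing that $p-2j$ is odd yields $\prod_{k=2j}^{p-1}k\equiv-(p-2j)!\pmod p$, so that $(2j-1)!\,(p-2j)!\equiv 1\pmod p$. Since $p-2j=2(m+1-j)-1$, this says precisely that the factors indexed by $j$ and by $m+1-j$ are mutually inverse. When $p\equiv 1\pmod 4$ the integer $m$ is even, so $j\mapsto m+1-j$ is a fixed-point-free involution of $\{1,\dots,m\}$, and pairing the factors gives $\prod_{j=1}^m(2j-1)!\equiv 1\pmod p$.

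Finally I would invoke the classical congruence $(m!)^2\equiv -1\pmod p$ for $p\equiv 1\pmod 4$, which itself follows from $(p-1)!\equiv(-1)^m(m!)^2\equiv -1$ together with $m$ even. Combining, $\prod_{j=1}^m(2j-1)!\equiv 1\equiv -(m!)^2\pmod p$, which is exactly the reduced target, so
$$
\prod_{1\le i<j\le m}(j^2-i^2)\equiv\frac{1}{m!}\equiv -m!\pmod p.
$$
I expect the only delicate point to be the parity bookkeeping in the pairing step, namely correctly extracting the sign $(-1)^{p-2j}=-1$ in the tail product and confirming that the hypothesis $p\equiv 1\pmod 4$ (equivalently $m$ even) is exactly what makes the involution $j\mapsto m+1-j$ have no fixed point; the remaining manipulations are routine factorial algebra.
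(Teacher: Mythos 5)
Your proof is correct and follows essentially the same route as the paper: the same factorization $j^2-i^2=(j-i)(j+i)$ leading to the identity $\prod_{1\le i<j\le m}(j^2-i^2)=\frac{1}{m!}\prod_{j=1}^{m}(2j-1)!$, followed by the Wilson-type pairing $(2j-1)!\,(p-2j)!\equiv 1\pmod p$ and the congruence $(m!)^2\equiv-1\pmod p$ for $m=\frac{p-1}{2}$ even. The only difference is cosmetic (you fix $j$ and multiply over $i$, the paper fixes $i$ and multiplies over $j$), and your sign bookkeeping is accurate throughout.
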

\begin{proof}
Evidently,
\begin{align}\label{ijp12j2i2p14}
\prod_{1\leq i<j\leq \frac{p-1}{2}}(j^2-i^2)
=&\prod_{1\leq i\leq\frac{p-3}{2}}\bigg(\prod_{i<j\leq\frac{p-1}{2}}(j-i)\bigg)\cdot\prod_{1\leq i\leq\frac{p-3}{2}}\bigg(\prod_{i<j\leq\frac{p-1}{2}}(j+i)\bigg)\notag\\
=&\prod_{1\leq i\leq \frac{p-3}{2}}\bigg(\frac{p-1}{2}-i\bigg)!\cdot\prod_{1\leq i\leq \frac{p-3}{2}}\frac{(i+\frac{p-1}{2})!}{(2i)!}\notag\\
=&\prod_{\substack{1\leq k\leq p-2\\
k\neq \frac{p-1}{2}}}k!\cdot\prod_{1\leq i\leq \frac{p-3}{2}}\frac{1}{(2i)!}=\frac{1}{(\frac{p-1}{2})!}\prod_{0\leq k\leq\frac{p-3}{2}}(2k+1)!.  
\end{align}
By the classical Wilson theorem, for each $1\leq k\leq p-2$,
\begin{equation}\label{kp1kmodp}
k!(p-1-k)!\equiv (-1)^{p-1-k}k!\prod_{j=1}^{p-1-k}(p-j)=(-1)^k(p-1)!\equiv(-1)^{k+1}
\pmod{p}.
\end{equation}
Since $(p-1)/2$ is even now, we have
$$
\prod_{1\leq i<j\leq \frac{p-1}{2}}(j^2-i^2)\equiv\frac{1}{(\frac{p-1}{2})!}\equiv -\bigg(\frac{p-1}{2}\bigg)!\pmod{p}.
$$
\end{proof}
In order to evaluate $\prod_{1\leq i<j\leq \frac{p-1}{2}}(g^{2j}-g^{2i})$, we may view $g$ as an integer lying in $\{1,2,\ldots,p-1\}$. Let $\zeta=e^{\frac{2\pi\bi}{p-1}}$ be a $(p-1)$-th primitive root of unity. Clearly
$$
g^{p-1}-1=\prod_{j=1}^{p-1}(g-\zeta^j)\equiv 0\pmod{p}.
$$
Hence there exists $1\leq j_0\leq p-1$ such that $g-\zeta^{j_0}$ is not prime to $p$, i.e., 
$$
g-\zeta^{j_0}\equiv 0\pmod{\fp}
$$
for some prime ideal $\fp\subseteq\Q(\zeta)$ with $\fp\mid p$, where $\Q(\zeta)$ denotes the $(p-1)$-th cyclotomic field. For each $1\leq k\leq p-2$, since $g^k-1$ is prime to $p$ , we must have $\zeta^{kj_0}\neq 1$. So $j_0$ is prime to $p-1$, i.e., $\zeta^{j_0}$ is also a $(p-1)$-th primitive root of unity. 
Recall that $\psi:\,\zeta\mapsto\zeta^{j_0}$ gives a Galois automorphism over $\Q(\zeta)$ (cf. \cite[p. 71]{SL}).
Hence without loss of generality, we may assume that $j_0=1$, i.e.,
\begin{equation}\label{gzetafp}
g\equiv \zeta\pmod{\fp}.
\end{equation}
\begin{lemma} Suppose that $p\equiv1\pmod{4}$ is a prime. Then
\begin{equation}\label{g2jg2imodp}
\prod_{1\leq i<j\leq \frac{p-1}{2}}(g^{2j}-g^{2i})\equiv e^{\frac{(p-3)(3p+1)}{16}\cdot\pi\bi}\cdot\bigg(\frac{p-1}{2}\bigg)^{\frac{p-1}{4}}\pmod{\fp}.
\end{equation}
\end{lemma}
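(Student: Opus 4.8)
The plan is to reduce the integer product to a product of roots of unity via the congruence $g\equiv\zeta\pmod\fp$ established in (\ref{gzetafp}), and then to evaluate that product exactly by adapting Petrov's computation behind (\ref{petrov}). First I would note that $g\equiv\zeta\pmod\fp$ gives $g^k\equiv\zeta^k\pmod\fp$ for every $k\ge0$, so that
$$
\prod_{1\le i<j\le\frac{p-1}2}(g^{2j}-g^{2i})\equiv\prod_{1\le i<j\le\frac{p-1}2}(\zeta^{2j}-\zeta^{2i})\pmod\fp.
$$
Writing $m=(p-1)/2$ and $\omega=\zeta^2=e^{2\pi\bi/m}$, the hypothesis $p\equiv1\pmod4$ makes $m$ even and $\omega$ a primitive $m$-th root of unity, so $\omega^1,\dots,\omega^m$ run over all $m$-th roots of unity. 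Hence the right-hand side is the Vandermonde-type product $V:=\prod_{1\le i<j\le m}(\omega^j-\omega^i)$, which lies in $\Z[\omega]\subseteq\Z[\zeta]$, and it remains only to compute $V$ as a complex number.

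For the evaluation I would use the standard symmetric split
$$
\omega^j-\omega^i=\omega^{(i+j)/2}\bigl(\omega^{(j-i)/2}-\omega^{(i-j)/2}\bigr)=\omega^{(i+j)/2}\cdot2\bi\sin\frac{\pi(j-i)}m.
$$
Because $1\le j-i\le m-1$, each sine is strictly positive, so modulus and phase separate cleanly: the sines contribute the positive real factor $\prod_{i<j}\sin(\pi(j-i)/m)$, the constants contribute $(2\bi)^{\binom m2}$, and the powers of $\omega$ contribute $\omega^{\frac12\sum_{i<j}(i+j)}$. Evaluating $\sum_{1\le i<j\le m}(i+j)=(m-1)\cdot\frac{m(m+1)}2$ and $\bi^{\binom m2}=e^{\pi\bi\binom m2/2}$, and recording $|V|=m^{m/2}$ (equivalently $2^{\binom m2}\prod_{i<j}\sin(\pi(j-i)/m)=m^{m/2}$, from the discriminant of $x^m-1$), I would collect the phase into the single exponent $\frac{(3m+2)(m-1)}4\pi\bi$. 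Substituting $m=(p-1)/2$ turns this exponent into $\frac{(p-3)(3p+1)}{16}\pi\bi$ and turns $m^{m/2}$ into $((p-1)/2)^{(p-1)/4}$, which is exactly (\ref{g2jg2imodp}); as a sanity check, the identical computation with $m$ replaced by $p-1$ recovers Petrov's formula (\ref{petrov}).

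I expect the only genuinely delicate point to be the phase bookkeeping: one must keep the half-integer exponents $(i+j)/2$ on a fixed branch, verify positivity of every sine (so that $|V|$ and $\arg V$ truly decouple), and combine $\omega^{\frac12\sum(i+j)}$ with $\bi^{\binom m2}$ without sign errors. The evenness of $m=(p-1)/2$ forced by $p\equiv1\pmod4$ is used both to make $m^{m/2}=((p-1)/2)^{(p-1)/4}$ an honest integer and to make the combined phase land on the stated root of unity. By contrast, the reduction step and the Vandermonde modulus $|V|=m^{m/2}$ are routine, so the arithmetic of the exponent is where I would be most careful.
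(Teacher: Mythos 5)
Your proposal is correct and follows essentially the same route as the paper: reduce via $g\equiv\zeta\pmod{\fp}$ and then evaluate the Vandermonde product of roots of unity by separating modulus and argument, where your split $\omega^{j}-\omega^{i}=\omega^{(i+j)/2}\cdot 2\bi\sin\frac{\pi(j-i)}{m}$ gives exactly the paper's per-pair phase $(i+j)\frac{2\pi}{p-1}+\frac{\pi}{2}$ and your discriminant computation of $|V|=m^{m/2}$ is the paper's evaluation of $\Upsilon(\zeta)^2$ in disguise. The only (harmless) overstatement is the role of $p\equiv1\pmod 4$: the identity holds for every odd prime (as the paper itself uses in Section 3), the evenness of $m$ not being needed for $\omega$ to be a primitive $m$-th root of unity nor for the formula to hold as a complex identity.
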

\begin{proof}
Let 
\begin{equation}
\Upsilon(x):=\prod_{1\leq i<j\leq \frac{p-1}{2}} (x^{2j}-x^{2i}).
\label{Gqs}
\end{equation}
Clearly
$$
\Upsilon(\zeta)^2=\prod_{1\leq i<j\leq \frac{p-1}{2}}(\zeta^{2j}-\zeta^{2i})^2 
=(-1)^{\frac{(p-1)(p-3)}{8}}\prod_{1\leq i\neq j\leq \frac{p-1}{2}}(\zeta^{2j}-\zeta^{2i}).
$$
Note that
\begin{align*}
\prod_{i=1}^{\frac{p-3}{2}}(1-\zeta^{2i})=\lim_{x\to 1}\prod_{i=1}^{\frac{p-3}{2}}(x-\zeta^{2i})=\lim_{x\to1}\frac{x^{\frac{p-1}{2}}-1}{x-1}=\frac{p-1}{2}.
\end{align*}
It follows that
\begin{align*}
\prod_{j=1}^{\frac{p-1}{2}}\prod_{\substack{1\leq i\leq\frac{p-1}{2}\\ i\neq j }}(\zeta^{2j}-\zeta^{2i})
=&\prod_{j=1}^{\frac{p-1}{2}}\zeta^{(p-3)j}\prod_{\substack{1\leq i\leq\frac{p-1}{2}\\ i\neq j }}(1-\zeta^{2(i-j)})\\
=&\prod_{j=1}^{\frac{p-1}{2}}\bigg(\zeta^{(p-3)j}\cdot\frac{p-1}{2}\bigg)
=\zeta^{\frac{(p-3)(p^2-1)}{8}}\cdot\bigg(\frac{p-1}{2}\bigg)^{\frac{p-1}{2}}.
\end{align*}
Thus we get
\begin{equation}
\Upsilon(\zeta)^2=(-1)^{\frac{(p-1)(p-3)}{8}+\frac{(p-3)(p+1)}{4}}\cdot\bigg(\frac{p-1}{2}\bigg)^{\frac{p-1}{2}}.
\end{equation}
In particular,
$$
|\Upsilon(\zeta)|=\bigg(\frac{p-1}{2}\bigg)^{\frac{p-1}{4}}.
$$

Let $\alpha=\arg \Upsilon(\zeta)$ denote the argument of $\Upsilon(\zeta)$.   Note that for any $\theta\in[0,2\pi)$,
$$
1-e^{\bi\theta}=(1-\cos\theta)-\bi\sin\theta=2\sin\frac{\theta}2\bigg(\sin\frac{\theta}{2}-\bi\cos\frac{\theta}{2}\bigg)=2\sin\frac{\theta}2\cdot e^{\bi(\frac{\theta}{2}-\frac{\pi}{2})},
$$
i.e.,
$$
\arg(1-e^{\bi\theta})\equiv\frac{\theta}{2}-\frac{\pi}{2}\pmod{2\pi},
$$
where for $x,y,z\in\R$, $x\equiv y\pmod{z}$ means $x-y=nz$ for some integer $n$. 
Hence
$$
\arg(\zeta^{2j}-\zeta^{2i})=\arg\big(-\zeta^{2i}(1-\zeta^{2j-2i})\big)\equiv
(j+i)\cdot\frac{2\pi}{p-1}+\frac\pi2\pmod{2\pi}
$$
for each $1\leq i<j\leq(p-1)/2$.
It follows that
\begin{align*}
\arg \Upsilon(\zeta)\equiv&\frac{2\pi}{p-1}\sum_{1\leq i<j\leq \frac{p-1}{2}}(j+i)+\frac{\pi}{2}\sum_{1\leq i<j\leq \frac{p-1}{2}}1\\
=&
\frac{(p-3)(p^2-1)}{16}\cdot\frac{2\pi}{p-1}+\frac{(p-3)(p-1)}{8}\cdot\frac{\pi}2\\
=&\frac{(p-3)(3p+1)}{16}\cdot\pi\pmod{2\pi}.
\end{align*}
So
\begin{equation}\label{Upsilonzeta}
\Upsilon(\zeta)=e^{\frac{(p-3)(3p+1)}{16}\cdot\pi\bi}\cdot\bigg(\frac{p-1}{2}\bigg)^{\frac{p-1}{4}}.
\end{equation}
\end{proof}
Furthermore, we also need a result of Williams and Currie (cf. \cite[p. 972]{WC82}).
\begin{lemma}
Suppose that $p\equiv 1\pmod{4}$ is a prime. Then 
\begin{equation}\label{2p14modp18}
 	2^{\frac{p-1}{4}}\equiv (-1)^{\frac{h(-p)}{4}+\frac{p-1}{8}}\pmod p
\end{equation}
if $p\equiv 1 \pmod 8$. And
\begin{equation}\label{2p14modp58}
2^{\frac{p-1}{4}}\cdot \bigg(\frac{p-1}{2}\bigg)!\equiv (-1)^{\frac{1}{4}(h(-p)+2)+\frac{p-5}{8}}\pmod p\end{equation}
 if $p\equiv 5\pmod 8$.
\end{lemma}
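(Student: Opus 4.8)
The plan is to recognize both sides as elements of order dividing $4$ in $\F_p$ and then determine the relevant sign by combining the classical evaluation of the biquadratic character of $2$ with a congruence for the class number. First I would record the coarse information. By the second supplement to quadratic reciprocity, $\jacob2p=(-1)^{(p^2-1)/8}$, so $2^{(p-1)/2}\equiv1\pmod p$ when $p\equiv1\pmod8$ and $2^{(p-1)/2}\equiv-1\pmod p$ when $p\equiv5\pmod8$. Hence $2^{(p-1)/4}\equiv\pm1\pmod p$ in the first case, while in the second $2^{(p-1)/4}$ is a square root of $-1$. Moreover, Wilson's theorem gives $((p-1)/2)!^2\equiv(-1)^{(p+1)/2}\equiv-1\pmod p$ since $p\equiv1\pmod4$, so $((p-1)/2)!$ is itself a square root of $-1$; consequently $2^{(p-1)/4}\cdot((p-1)/2)!$ reduces to $\pm1$ when $p\equiv5\pmod8$. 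Thus both identities amount to pinning down a single sign.

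Next I would evaluate $2^{(p-1)/4}$ through the representation $p=a^2+b^2$ with $a$ odd and $b$ even, normalized by $a\equiv1\pmod4$; note that $p\equiv1\pmod8$ forces $b\equiv0\pmod4$ while $p\equiv5\pmod8$ forces $b\equiv2\pmod4$. Gauss's classical determination of the quartic residuacity of $2$ — provable by writing a quartic Jacobi sum $J(\chi,\chi)$ as $a+b\bi$ and computing $\chi(2)$ — yields a congruence of the shape $2^{(p-1)/4}\equiv(-1)^{b/4}\pmod p$ when $p\equiv1\pmod8$, and a matching evaluation of $2^{(p-1)/4}\cdot((p-1)/2)!$ when $p\equiv5\pmod8$, the factorial supplying the required square root of $-1$. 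One checks these quickly on small primes, e.g.\ $p=17=1^2+4^2$ gives $2^4\equiv-1=(-1)^{b/4}\pmod{17}$. This step reduces the lemma to expressing the residue of $b$ modulo $8$ in terms of $h(-p)$.

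The remaining and deepest ingredient is therefore a congruence linking $b$ to the class number, namely $b/4\equiv\tfrac14h(-p)+\tfrac{p-1}8\pmod2$ for $p\equiv1\pmod8$, and the corresponding statement for $p\equiv5\pmod8$. Here I would invoke Dirichlet's analytic class number formula for the imaginary quadratic field $\Q(\sqrt{-p})$, whose discriminant is $-4p$, which expresses $h(-4p)$ (with the standard elementary normalization) as a character sum of the odd Kronecker character $\chi_{-4p}$ over an initial interval, and I would analyze this sum $2$-adically. Concretely I would relate the character sum to Jacobsthal-type sums, which reproduce the representation $p=a^2+b^2$, and then track powers of $2$ to read off $h(-p)\pmod8$ in terms of $b\pmod8$. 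Classical results on the $2$-part of $h(-4p)$ (genus theory for the discriminant $-4p$) guarantee that $4\mid h(-p)$ when $p\equiv1\pmod8$ and $h(-p)\equiv2\pmod4$ when $p\equiv5\pmod8$, so the exponents $\tfrac14h(-p)$ and $\tfrac14(h(-p)+2)$ in the statement are genuinely integers.

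Finally I would assemble the two congruences, treating $p\equiv1\pmod8$ and $p\equiv5\pmod8$ separately and folding in the factorial in the latter, to recover $(-1)^{h(-p)/4+(p-1)/8}$ and $(-1)^{(h(-p)+2)/4+(p-5)/8}$. I expect the main obstacle to be the class-number step rather than the quartic-residue step: Gauss's evaluation is robust, but extracting $h(-p)$ modulo $8$ from Dirichlet's formula and matching it against $b\pmod8$ demands careful $2$-adic bookkeeping in which every normalization — the choice $a\equiv1\pmod4$, the treatment of the sign of $b$, and the elementary constant in the class number formula — must be tracked exactly. I would fix all these conventions and validate the resulting congruences against the primes $p=13,17,29,37,41,\dots$ before finalizing.
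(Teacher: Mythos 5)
The paper does not prove this lemma at all: it is quoted from Williams and Currie \cite[p.~972]{WC82}, so there is no internal argument to measure your proposal against. Your preliminary reductions are correct and standard: the second supplement gives $2^{(p-1)/2}\equiv(-1)^{(p^2-1)/8}\pmod p$, so $2^{(p-1)/4}\equiv\pm1$ when $p\equiv1\pmod8$ and is a square root of $-1$ when $p\equiv5\pmod8$; Wilson's theorem makes $\big(\frac{p-1}{2}\big)!$ a square root of $-1$ for $p\equiv1\pmod4$; writing $p=a^2+b^2$ with $a$ odd, one indeed has $4\mid b$ or $b\equiv2\pmod4$ according as $p\equiv1$ or $5\pmod8$; Gauss's evaluation $2^{(p-1)/4}\equiv(-1)^{b/4}\pmod p$ in the first case is correct; and the genus-theoretic facts ($4\mid h$ for $p\equiv1\pmod 8$, $h\equiv2\pmod4$ for $p\equiv5\pmod8$) that make the exponents integral are right. (A small notational point: the lemma's $h(-p)$ and the theorem's $h(-4p)$ both denote the class number of $\Q(\sqrt{-p})$, whose discriminant is $-4p$; you handled this correctly.)

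The genuine gap lies in the two steps you defer. First, for $p\equiv5\pmod8$ you never pin down the ``matching evaluation'' of $2^{(p-1)/4}\cdot\big(\frac{p-1}{2}\big)!$; fixing the sign here requires normalizing both the factorial and $b$ consistently (typically through a quartic Gauss or Jacobi sum), and it does not follow formally from the $p\equiv1\pmod8$ case. Second, and decisively, the congruence $b/4\equiv\frac14h(-4p)+\frac{p-1}{8}\pmod2$ (with its $p\equiv5\pmod8$ analogue) is not a routine $2$-adic massaging of Dirichlet's class number formula: extracting $h(-4p)\bmod 8$ and matching it against $b\bmod 8$ \emph{is} the theorem of Williams and Currie (building on Barrucand--Cohn and Hasse) that this lemma encapsulates, and it is by far the hardest ingredient. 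As written, your argument reduces the lemma to two unproved classical results whose conjunction is the lemma itself; that is acceptable as a reduction to the literature --- indeed it is exactly what the authors do by citing \cite{WC82} --- but it does not constitute a proof.
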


First, assume that case $p\equiv 5\pmod{8}$. It is easy to check that
$$
\frac{(p-3)(3p+1)}{16}\equiv \frac{p-5}{8}\pmod{2}
$$
when $p\equiv 5\pmod{8}$. So
\begin{equation}\label{Ggmodfp}
\Upsilon(g)\equiv \Upsilon(\zeta)=\bigg(\frac{p-1}{2}\bigg)^{\frac{p-1}{4}}\cdot(-1)^{\frac{p-5}{8}}\pmod{\fp}.
\end{equation}
Note that both sides of (\ref{Ggmodfp}) are rational integers. So we have
\begin{equation}
\Upsilon(g)\equiv (-1)^{\frac{p-5}{8}}\cdot\bigg(\frac{p-1}{2}\bigg)^{\frac{p-1}{4}}\equiv
\frac{(-1)^{\frac{p+3}{8}}}{2^{\frac{p-1}{4}}}
\pmod{p}.
\end{equation}
Combining (\ref{j2i2modp}), (\ref{g2jg2imodp}) and (\ref{2p14modp58}), we obtain that
$$
(-1)^{|\nu_g|}\equiv\prod_{1\leq i<j\leq \frac{p-1}{2}}\frac{g^{2j}-g^{2i}}{j^2-i^2}\equiv 
(-1)^{\frac{1}{4}(h(-p)+2)}\pmod{p}
$$
provided $p\equiv 5\pmod{8}$.
(\ref{tau58}) is concluded.

Next, assume that $p\equiv 1\pmod{8}$. Clearly now
$$
\frac{(p-3)(3p+1)}{16}+\frac12\equiv\frac{p-1}{8}\pmod{2}.
$$
According to (\ref{j2i2modp}), (\ref{g2jg2imodp}) and (\ref{2p14modp18}), we know that
\begin{equation}\label{taugp18}
(-1)^{|\tau_g|}\equiv\prod_{1\leq i<j\leq \frac{p-1}{2}}\frac{g^{2j}-g^{2i}}{j^2-i^2}\equiv 
(-1)^{\frac{1}{4}h(-p)+1}\bi\cdot\bigg(\frac{p-1}{2}\bigg)!\pmod{\fp}.
\end{equation}
On the other hand, by view of (\ref{kp1kmodp}), we have
\begin{align}\label{jip1modp}
\prod_{1\leq i<j\leq p-1}(j-i)=\prod_{i=1}^{p-2}(p-1-i)!=\prod_{k=1}^{p-2}k!\equiv(-1)^{\frac{p^2-9}{8}}\cdot\bigg(\frac{p-1}{2}\bigg)!\pmod{p}.
\end{align}
So combining (\ref{jip1modp}) with (\ref{petrov}), we obtain that
\begin{align}\label{sigmagp18}
(-1)^{|\sigma_g|}\equiv\prod_{1\leq i<j\leq p-1}\frac{g^j-g^i}{j-i}\equiv 
&\frac{(-1)^{\frac{p^2-9}{8}+\frac{p-1}2}\cdot e^{
\frac{(p-2)(3p-1)}{4}\cdot\pi\bi}}{(\frac{p-1}{2})!}\notag\\
\equiv &
(-1)^{\frac{p^2-9}{8}}\cdot e^{
\frac{(p-2)(3p-1)}{4}\cdot\pi\bi}\cdot\bigg(\frac{p-1}{2}\bigg)!\pmod{\fp}.
\end{align}
Note that
$$
\frac{p^2-9}{8}\equiv0\pmod{2}
,\qquad \frac{(p-2)(3p-1)}{4}\equiv\frac32\pmod{2}
$$
for any $p\equiv 1\pmod{8}$. Thus (\ref{tau18}) immediately follows from (\ref{taugp18}) and (\ref{sigmagp18}).

\section{The case $p\equiv 3\pmod{4}$}
\setcounter{lemma}{0}
\setcounter{theorem}{0}
\setcounter{corollary}{0}
\setcounter{remark}{0}
\setcounter{equation}{0}
\setcounter{conjecture}{0}

\begin{proof}[Proof of (iii) of Theorem \ref{main}]
Suppose that $p=18(2n+1)^2+1$ is a prime for some positive integer $n$. 
Let $\zeta$ and $\fp$ be the ones in (\ref{gzetafp}). Note that (\ref{Upsilonzeta}) is factly valid for each odd prime $p$, i.e.,
$$
\Upsilon(\zeta)=e^{\frac{(p-3)(3p+1)}{16}\cdot\pi\bi}\cdot\bigg(\frac{p-1}{2}\bigg)^{\frac{p-1}{4}}.
$$
Since $p=18(2n+1)^2+1\equiv 3\pmod{16}$ now, we have
\begin{equation}\label{g2jg2ip2n121}
\prod_{1\leq i<j\leq \frac{p-1}{2}}(g^{2j}-g^{2i})\equiv e^{\frac{(p-3)(3p+1)}{16}\cdot\pi\bi}\cdot\bigg(\frac{p-1}{2}\bigg)^{\frac{p-1}{4}}
=(6n+3)^{\frac{p-1}{2}}
\pmod{\fp}.
\end{equation}
Noting that both sides of (\ref{g2jg2ip2n121}) are rational integers, we factly get
\begin{equation}
\prod_{1\leq i<j\leq \frac{p-1}{2}}(g^{2j}-g^{2i})\equiv (6n+3)^{\frac{p-1}{2}}
\pmod{p}.
\end{equation}
On the other hand, in view of (\ref{ijp12j2i2p14}) and (\ref{kp1kmodp}),
\begin{align*}
\prod_{1\leq i<j\leq \frac{p-1}{2}}(j^2-i^2)
=\frac{1}{(\frac{p-1}{2})!}\prod_{k=0}^{\frac{p-3}{2}}(2k+1)!
=\prod_{k=0}^{\frac{p-5}{4}}(2k+1)!(p-2-2k)!
\equiv1\pmod{p}.
\end{align*}
Hence
$$
(-1)^{|\tau_g|}=\prod_{1\leq i<j\leq \frac{p-1}{2}}\frac{g^{2j}-g^{2i}}{j^2-i^2}\equiv 
(6n+3)^{\frac{p-1}{2}}\equiv\jacob{6n+3}{p}\pmod{p}.
$$
Since $p\equiv 3\pmod{4}$, by the law of quadratic reciprocity, we have
$$
\jacob{6n+3}{p}=(-1)^{3n+1}\cdot\jacob{p}{6n+3}=(-1)^{n+1}\cdot\jacob{2(6n+3)^2+1}{6n+3}=(-1)^{n+1}.
$$
The proof of (\ref{tau182n121}) is complete.
\end{proof}
\begin{proof}[Proof of (iv) of Theorem \ref{main}]
Suppose that $p\equiv 3\pmod 4$ is  a prime not of the form $18(2n+1)^2+1$. 
The case $p=3$ can be verified directly. So without loss of generality, below we assume that $p>3$. We mention that $(p-1)/2$ can't be a perfect square. Otherwise, assume on the contrary that $p=2m^2+1$ for some integer $m$. If $3\nmid m$, then $$
p=2m^2+1\equiv 2+1\equiv 0\pmod{3},
$$
which is impossible since $p>3$ is prime. Suppose that $3\mid m$. Since $p\equiv 3\pmod{4}$, $m$ must be odd. So $m=3(2n+1)$ for some integer $n$. This also contradicts with our assumption that $p$ isn't of the form $18(2n+1)^2+1$.

Let $$h=\frac{p-1}2$$ and let $\Z_h=\Z/h\Z$ denote the cyclic group of order $h$. For convenience, we  may write $\Z_h=\{1,2,\ldots,h\}$. On the other hand, recall that we have viewed $\cH_p=\{1,2,\ldots,h\}$ as a subset of $\F_p$. Let $\psi$ be the natural bijection from $\Z_h$ to $\cH_p$.

Assume that $1\leq a\leq p-1$ is prime to $p-1$. Define
$$
\eta_a(b):=ab
$$
for each $b\in\Z_h$. Since $a$ is also prime to $h$, $\eta_a$ is a permutation over $\Z_h$.
We claim that
\begin{equation}\label{taugataug}
\tau_{g^a}=\tau_{g}\circ\psi\circ\eta_a\circ\psi^{-1}
\end{equation}
for each $g\in\cR_p$.
In fact, assume that $1\leq b\leq h$ and $ab\equiv c\pmod{h}$ for some $1\leq c\leq h$. Clearly
$$
c=\psi\circ\eta_a\circ\psi^{-1}(b)
$$
provided that both $b$ and $c$ are viewed as the elements of $\cH_p$.
Note that either $ab\equiv c\pmod{p-1}$ or $ab\equiv c+h\pmod{p-1}$ now. Hence, we must have 
$$
g^c\equiv (g^a)^b\pmod{p}
$$
or 
$$
g^c\equiv (g^a)^b\cdot g^h\equiv -(g^{a})^b\pmod{p}.
$$
That is,
$$
\tau_{g^a}(b)=\tau_{g}(c).
$$

In view of (\ref{taugataug}), we get
$$
(-1)^{\tau_{g^a}}=(-1)^{\tau_{g}}\cdot(-1)^{|\eta_a|}.
$$
We shall find $1\leq a\leq p-1$ with $(a,p-1)=1$ such that $\eta_a$ is an odd permutation. 
Since $h$ is odd, according to the generalized Zolotarev lemma (cf. \cite{BC}), we know that
$$
(-1)^{|\eta_a|}=\jacob{a}{h},
$$
where $\jacob{\cdot}{\cdot}$ denotes the Jacobi symbol.
We may write $h=q_1^{\alpha_1}q_2^{\alpha_2}\cdots q_s^{\alpha_s}$, where $q_1,\ldots,q_s$ are distinct odd primes and $\alpha_1,\ldots,\alpha_s\geq 1$. Recall that $h$ can't be a perfect square. Without loss of generality, assume that $\alpha_1$ is odd. By the Chinese remainder theorem, there exists $1\leq a\leq p-1$ with $(a,p-1)=1$ such that $a$ is a quadratic non-residue modulo $q_1$ and is a quadratic residue modulo $q_i$ for each $2\leq i\leq s$. Then 
$$
\jacob{a}{h}=\prod_{j=1}^s\jacob{a}{q_j}^{\alpha_j}=(-1)^{\alpha_1}=-1,
$$
i.e., $(-1)^{|\eta_a|}=-1$.

Finally, since $a$ is prime to $p-1$, clearly $\nu_a:\,g\mapsto g^a$ is a permutation over $\cR_p$. So
$$
|\{g:\,(-1)^{|\tau_g|}=1\}|=
|\{g:\,(-1)^{|\tau_{g^a}|}=-1\}|\leq|\{g:\,(-1)^{|\tau_{g}|}=-1\}|,
$$
and
$$
|\{g:\,(-1)^{|\tau_g|}=-1\}|=
|\{g:\,(-1)^{|\tau_{g^a}|}=1\}|\leq|\{g:\,(-1)^{|\tau_{g}|}=1\}|.
$$
Hence we must have $|\{g\in\cR_p:\,(-1)^{|\tau_g|}=1\}|=|\{g\in\cR_p:\,(-1)^{|\tau_g|}=-1\}|$.

\begin{acknowledgment}
The authors thank Professor Zhi-Wei Sun for his very helpful comments. The first author also thanks Professors Henri Cohen and Will Jagy for informing him of the paper \cite{WC82}.
\end{acknowledgment}

\end{proof}
  
\end{document}